\renewcommand{\paragraph}[1]{\par\vspace{1ex}\noindent #1}
\theoremstyle{plain}
\newtheorem{theorem}{Theorem}
\newtheorem*{proposition*}{Proposition}
\newtheorem*{corollary*}{Corollary}
\newtheorem{lemma}[theorem]{Lemma}
\newtheorem*{theorem*}{Theorem}
\newtheorem*{lemma*}{Lemma}
\newtheorem*{conjecture*}{Conjecture}
\newtheorem*{question*}{Question}
\theoremstyle{definition}
\newtheorem*{exercise*}{Exercise}
\theoremstyle{remark}
\newtheorem*{remark*}{Remark}
\newtheorem{remsTh}[theorem]{Remarks}
\newcommand{\subclass}[1]{}
\newcommand{\enumTi}[1]{\renewcommand{\theenumi}{#1}}
\newcommand{\alphenumi}{\enumTi{\alph{enumi}}}
\newcommand{\romenumi}{\enumTi{\roman{enumi}}}
\renewcommand{\em}{\sl}
\newcommand{\sstack}[1]{{\substack{#1}}}
\newcommand{\sabs}[1]{{\lvert{#1}\rvert}}
\DeclareMathOperator{\pr}{pr}
\newcommand{\nfrac}[2]{{\nicefrac{#1}{#2}}}
\newcommand{\RR}{\mathbb{R}}
\DeclareMathOperator{\Prb}{\mathbf{P}}
\newlength{\algotabbingwidth}
\begin{document}

\title{Random lifts of $K_5\setminus e$ are 3-colourable}%
\author{Babak Farzad}%
\address{BF: Mathematics Department\\
  Brock University\\
  St.~Catharines, Canada%
}%
\email{bfarzad@brocku.ca}%
\author{Dirk Oliver Theis}%
\address{DOT: Fakult\"at f\"ur Mathematik\\
  Otto-von-Guericke-Universit\"at Magdeburg\\
  Magdeburg, Germany
}%
\email{theis@ovgu.de}%
\thanks{This work was supported by an NSERC International Collaborative Funding Initiative Grant.  The work of the first author is partially supported by
    NSERC Discovery Grant \# 356035-08.  The work of the second author was supported by the Fonds National de la Recherche Scientifique (F.R.S.--FNRS)}

\keywords{Graph theory, random graphs, random lifts of graphs, colouring, chromatic number}
\begin{abstract}
  Amit, Linial, and Matou\v sek (Random lifts of graphs III: independence and chromatic number, \textit{Random Struct.\ Algorithms}, 2001) have raised the
  following question: Is the chromatic number of random $h$-lifts of $K_5$ asymptotically (for $h\to\infty$) almost surely (a.a.s.)\ equal to a single number?
  In this paper, we offer the following partial result: The chromatic number of a random lift of $K_5\setminus e$ is a.a.s.\ three.
\end{abstract}

\maketitle

\section{Introduction}

Let $G$ be a graph, and $h$ a positive integer.  An \textit{$h$-lift} of $G$ is a graph $\widetilde G$ which is an $h$-fold covering of $G$ in the topological
sense.  Equivalently, there is a graph homomorphism $\phi\colon \widetilde G\to G$ which maps the neighbourhood of any vertex $v$ in $\widetilde G$ one-to-one
onto the neighbourhood of the vertex $\phi(v)$ of $G$.  The graph $G$ is called the \textit{base graph} of the lift.

More concretely, we may say that an $h$-lift of $G$ has vertex set $V(G)\times[h]$ (where we let $[h]:=\{1,\dots,h\}$ as usual).  The set $\{v\}\times[h]$ is
called the \textit{fibre over $v$}.  Fixing an orientation of the edges of $G$, the edge set of an $h$-lift is of the following form: There exist permutations
$\sigma_e$ of $[h]$, $e\in E(G)$, such that for every two adjacent vertices $u$ and $v$ of $G$, if the edge $uv$ is oriented $u\to v$, the edges between the fibres
$\{v\}\times[h]$ and $\{u\}\times[h]$ are $(u,j)(v,\sigma_{uv}(j))$, $j\in[h]$.
Changing the orientation of the edges in the graph does not change the lift, provided that permutations on edges on which the orientation is changed are
replaced by their respective inverses.  In this spirit, for an edge $uv$ in $G$, regardless of its orientation, we denote by $\sigma_{uv}$ the permutation for
which the edges between the fibres are $\{ (u,j)(v,\sigma_{uv}(j)) \mid j\in[h] \}$.

By a \textit{random $h$-lift} we mean a graph chosen uniformly at random from the graphs just described, which amounts to choosing a permutation, uniformly at random,
independently for every edge of $G$.

Random lifts of graphs have been proposed in a seminal paper by Amit, Linial, Matou\v sek, and Rozenman \cite{AmitLinialMatousekRozenman01}.  Their paper
sketched results on connectivity, independence number, chromatic number, perfect matchings, and expansion of random lifts, and was followed by a series of
papers containing broader and more detailed results by the same and other authors
\cite{AmitLinialNathan02,AmitLinialNathan06,AmitLinialMatousek01,LinialRozenman02}, and e.g.\ \cite{BiluLinial06,DrierLinial06},
\cite{BurginCheboluCooperFrieze06}.

In \cite{AmitLinialMatousek01} Amit, Linial, and Matou\v sek focused on independence and chromatic numbers of random lifts of graphs.  They asked the following
question.
\begin{quote}
  Is there a zero-one law for the chromatic number of random lifts?  In particular, is the chromatic number of a random lift of $K_5$ a.a.s.\ (for $h\to\infty$)
  equal to a single number (which may be either 3 or~4)?
\end{quote}

A random $h$-lift $\widetilde G$ of $K_5$ a.a.s.\ has an odd cycle, whence a.a.s.\ we have $\chi(\widetilde G) \geq 3$.  Moreover, $\widetilde G$ a.a.s.\ does not
contain a 5-clique.  Brooks' theorem implies that a.a.s.\ $\chi(\widetilde G)\le 4$.  So, a.a.s.\ $\chi(\widetilde G) \in \{3,4\}$.

In their paper, Amit, Linial, and Matou\v sek \cite{AmitLinialMatousek01} conjectured that the chromatic number of random lifts of any fixed base graph obeys a
zero-one law, i.e., it is asymptotically almost surely equal to a fixed number (depending only on the base graph).  In the case when the base graph is $K_n$,
they prove that $\chi(\widetilde G) = \Theta(n/\log n)$ a.a.s.\ (the constant in the $\Theta$ notation may depend neither on $h$ nor on $n$).  Five is the
smallest value for $n$, for which this is not trivial.

In this paper, we contribute the following to this problem.

\begin{theorem}\label{thm:K_5-e}
  A random lift of $K_5\setminus e$ is a.a.s.\ 3-colorable.
\end{theorem}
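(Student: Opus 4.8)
The plan is to exploit the large independent set that the missing edge creates in the lift, reducing everything to colouring a disjoint union of cycles subject to a short list of extra constraints, and then to rule out the possible obstructions by first-moment estimates. Write $V(K_5\setminus e)=\{v_1,v_2,v_3,v_4,v_5\}$ with $v_4v_5$ the deleted edge, so that $v_1v_2v_3$ is a triangle and each of $v_4,v_5$ is joined to all of $v_1,v_2,v_3$. Let $\widetilde G$ be the lift, with fibres $F_i=\{v_i\}\times[h]$. Then $F_4\cup F_5$ is an independent set of $\widetilde G$; the subgraph $H:=\widetilde G[F_1\cup F_2\cup F_3]$ is a lift of $K_3$, hence $2$-regular, i.e.\ a disjoint union of cycles, and since such a cycle must cyclically run through $F_1,F_2,F_3$ in a fixed rotational order, every one of its lengths is divisible by $3$; moreover each vertex of $F_4\cup F_5$ has exactly one neighbour in each of $F_1,F_2,F_3$, so lies on three edges, all of which leave $F_4\cup F_5$. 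Therefore a proper $3$-colouring of $\widetilde G$ with colour set $\{1,2,3\}$ is the same as a proper $3$-colouring $\chi$ of $H$ such that for every $u\in F_4\cup F_5$ the colour set $\chi(N(u))$ is \emph{not} all of $\{1,2,3\}$ (``$N(u)$ is not rainbow''): given such a $\chi$, colour each $u$ by a colour missing on $N(u)$. Note that every vertex of $H$ lies in exactly two of the $2h$ triples $N(u)$, $u\in F_4\cup F_5$, and each such triple meets each of $F_1,F_2,F_3$ once. So it suffices to solve the following \emph{core problem}: properly $3$-colour the union of cycles $H$ so that no prescribed triple $N(u)$ becomes rainbow.

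Now I would bring in the randomness. Since $H$ is the lift of $K_3$ given by the three random permutations on the edges of that triangle, its cycles correspond to those of their product $\pi$ --- a uniformly random permutation of $[h]$ --- with lengths multiplied by $3$; in particular a.a.s.\ $H$ has at most $\log^2 h$ cycles, of which a.a.s.\ at most $\log h$ are ``short'' (of length at most some fixed constant), the rest being ``long''. If $H$ has no odd cycle we are done, since a proper $2$-colouring of $H$ uses only two colours and hence is non-rainbow on every triple. Otherwise, a triple can be rainbow only if it contains a vertex coloured $3$, and colour $3$ is needed only to destroy the (a.a.s.\ at most $\log^2 h$) odd cycles of $H$ --- one vertex per odd cycle already suffices. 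So the construction is: choose an independent set $S\subseteq V(H)$ meeting every odd cycle of $H$ exactly once (and, as explained below, possibly a few more vertices), give colour $3$ to $S$, and properly $2$-colour the even cycles and paths making up $H-S$ with colours $\{1,2\}$, each component of $H-S$ carrying its own independent binary ``flip''. Every prescribed triple with no vertex, or with at least two vertices, in $S$ is then automatically non-rainbow; the only \emph{active} constraints come from the triples $\{x,y,z\}$ with exactly one vertex, say $x$, in $S$, and such a constraint says precisely that $y$ and $z$ receive the same colour from $\{1,2\}$. As $|S|=O(\log h)$ and each vertex of $S$ lies in two triples, there are only $O(\log h)$ active constraints.

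It remains to choose $S$ and the flips to satisfy all active constraints, and there are two ways this can go wrong. First, for an active triple $\{x,y,z\}$ the vertices $y,z$ might already lie on one component of $H-S$ at odd distance, which forces $\chi(y)\ne\chi(z)$; this I would repair either by reselecting the $S$-vertex on the odd cycle through $x$ --- for a long cycle there are many candidates --- or, if necessary, by adding one further vertex to $S$ on the path joining $y$ and $z$, separating them; such repairs are needed at most $O(\log h)$ times and keep $|S|=O(\log h)$. Second, the active constraints with $y,z$ on different components of $H-S$ read, in terms of the flip bits $b_D\in\FF_2$ of the components $D$, as equations $b_D+b_{D'}=c$ with $c$ a fixed constant, and this linear system could be inconsistent; but an inconsistency amounts to a cycle of such constraints with odd total right-hand side, which corresponds to an explicit configuration in which $O(1)$ active triples ``chain up'' through several odd cycles, and since $F_4\cup F_5$ is attached to $H$ through yet more mutually independent uniform permutations --- so the second and third members of a triple behave like independent uniform vertices --- a first-moment estimate shows that a.a.s.\ no such configuration exists, i.e.\ the constraint multigraph is a.a.s.\ a forest, whence the system is consistent and is solved greedily. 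A union bound over the explicitly listed bad events --- a triple contained in a cycle of $H$; too many, or too long, ``short'' odd cycles; a pathological local configuration around a short odd cycle; and the chaining configurations above --- then finishes the proof.

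The step I expect to be the actual work is making the choice of $S$ robust: one has to show that, conditional on the a.a.s.-good global structure of $H$, the local freedom in placing the vertices of $S$ always suffices to dodge every odd-distance trap \emph{and}, simultaneously, to keep the active-constraint multigraph acyclic --- which calls for a fairly delicate case analysis of how an active triple can re-enter, or link together, the odd cycles of $H$, together with the matching first-moment bounds. The remaining ingredients --- the cycle structure of a uniformly random permutation, and the routine first-moment estimates --- are standard.
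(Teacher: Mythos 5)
Your reduction is essentially the paper's: the fibres over the two non-adjacent vertices form an independent set, the rest is a $2$-regular union of cycles of lengths divisible by $3$, and a $3$-colouring exists iff the cycle part can be properly $3$-coloured so that no neighbourhood-triple is rainbow; your colour $3$ plays the role of the paper's ``red'', and your flip-bit constraint system is the paper's ``chunk-graph''. Up to that point everything is sound.

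The gap is in the probabilistic core. You keep $S$ of size $O(\log h)$ (a near-minimal transversal of the odd cycles, plus a few repairs) and claim that a first-moment estimate shows the resulting constraint multigraph is a.a.s.\ a forest, hence the $\FF_2$-system is consistent. That claim is false. The components of $H-S$ are essentially the cycles of a uniformly random permutation of $[h]$, and these are few ($\Theta(\log h)$ of them) and extremely unbalanced: the largest has length $\Theta(h)$ a.a.s. Since the second and third vertices of an active triple are (nearly) uniform in their fibres, each of your $\Theta(\log h)$ active constraints lands with \emph{both} endpoints in the largest component with probability bounded away from $0$. So a.a.s.\ the constraint multigraph has $\Theta(\log h)$ loops and parallel edges at the giant component; each such loop or parallel pair is inconsistent with probability about $\tfrac12$ (the relevant distance parities are close to fair coins), so a.a.s.\ the system is inconsistent. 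Your repair step does not rescue this: moving the $S$-vertex along its odd cycle does not change which component $y$ and $z$ lie in, and inserting a new vertex $w\in N(u)$ into $S$ kills one constraint but activates the two triples through $w$ and splits one component into two of still-comparable size, so the ratio of constraints to components stays near $1$ and the edge-probability onto the giant piece stays $\Theta(1)$; the ``bad'' configurations are not rare events that a union bound can dismiss, they are the typical situation.

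What is missing is a sparsification mechanism, and this is exactly what the paper's Phase~II supplies: it repeatedly takes a vertex $v$ of the independent part that currently has exactly one red neighbour (i.e.\ currently carries an active constraint), reveals one of its two remaining neighbours $u$ in the cycle part, and colours $u$ red. This destroys the constraint at $v$ (which now has two red neighbours and becomes a harmless leaf) while creating exactly one new constraint at the other neighbour of $u$, so the number of constraints stays at $O(\log^2 h)$ while the number of components grows by one per step. Running this for $\lfloor h^{1/3}\rfloor$ steps yields $n=\Theta(h^{1/3})$ components, all of size $O((h\log n)/n)$ a.a.s., against only $m=O(\log^2 h)$ constraints; only then does the first-moment bound $\sum_{\ell}\binom{n}{\ell}\ell!\binom{m}{\ell}(\log^2 n/n^2)^{\ell}=o(1)$ show the constraint multigraph is a forest. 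Without some analogue of this step your argument cannot be completed.
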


\section{Notation and Terminology}

Let $G:= K_5\setminus e$.  Clearly, $G$ is obtained by joining a cycle $C:=[x_1,x_2,x_3]$ to a stable set $S:=\{y_1,y_2\}$. Here, by \textsl{join} we mean that
every vertex of $C$ is made adjacent to every vertex of $S$.  From now on, $\widetilde G$ will be a random $h$-lift of $G$.  Let $\widetilde G_C$ and
$\widetilde G_S$ denote the subgraphs of $\widetilde G$ induced by the fibres over the vertices of $C$ and those over vertices of $S$, respectively.
Moreover, for $x\in V(G)$, we denote by $V_x = \{x\}\times [h]$ the set of vertices of $\widetilde G$ over $x$.
Similarly, for any set $U$ of vertices of $\widetilde G$ and $x\in V(G)$, we let $U_x := U\cap V_x$.


As an \textit{hors d'\oe uvre} intended to familiarise the reader with the most basic random lift arguments, we serve the following easy lemma.

\begin{lemma}\label{lem:ub-no-cycles}
  The graph $\widetilde G_C$ is a union of cycles, each of which is divisible by three.  A.a.s., the number of cycles in $\widetilde G_C$ is at most $\log^2 h$.
\end{lemma}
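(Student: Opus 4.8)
The structure of $\widetilde G_C$ is governed by the two permutations $\sigma := \sigma_{x_1 x_2}$ and $\tau := \sigma_{x_2 x_3}$ and $\rho := \sigma_{x_3 x_1}$ of $[h]$. The plan is to first identify the component structure exactly in terms of a single permutation, then apply a standard first-moment estimate on the number of cycles of a uniformly random permutation.

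For the structural part: a vertex $(x_1,j)$ in the fibre $V_{x_1}$ has exactly one neighbour in $V_{x_2}$, exactly one in $V_{x_3}$, and none in $V_{x_1}$; the same holds cyclically. Hence $\widetilde G_C$ is $2$-regular, so it is a disjoint union of cycles. To see that each cycle has length divisible by three, observe that $\widetilde G_C$ comes equipped with the projection $\phi\colon \widetilde G_C \to C$, and $C$ is itself a triangle; following a cycle of $\widetilde G_C$, the image under $\phi$ cycles through $x_1, x_2, x_3, x_1, \dots$ (it cannot backtrack, since each vertex of the lift has only one neighbour over each neighbouring vertex of the base, and that neighbour is the one we just came from only if the cycle has length $2$, which is impossible as $\widetilde G_C$ is simple), so the length is a multiple of $3$. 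Concretely, a component corresponds to an orbit of the permutation $\pi := \rho \circ \tau \circ \sigma$ acting on $[h]$ (tracking, say, the fibre $V_{x_1}$): the cycle through $(x_1,j)$ has length $3\,\ell$ where $\ell$ is the length of the orbit of $j$ under $\pi$. Therefore the number of cycles of $\widetilde G_C$ equals the number of cycles of $\pi$.

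Now $\pi = \rho\circ\tau\circ\sigma$ is a composition of three independent uniformly random permutations of $[h]$, hence $\pi$ itself is uniformly distributed on $S_h$. (This is the one place where we use that the three edge-permutations of the triangle are independent and uniform; the composition of any one uniform random permutation with arbitrary fixed ones is uniform.) So it suffices to show that a uniformly random permutation of $[h]$ a.a.s.\ has at most $\log^2 h$ cycles. For this, let $Z$ be the number of cycles; it is classical that $\Exp Z = H_h = \sum_{k=1}^h 1/k = \ln h + O(1)$, and that $\Var Z = H_h - \sum_{k=1}^h 1/k^2 = O(\log h)$. By Chebyshev's inequality,
\[
  \Prb\bigl(Z \ge \log^2 h\bigr) \le \Prb\bigl(\,\abs{Z - \Exp Z} \ge \tfrac12\log^2 h\,\bigr) \le \frac{\Var Z}{(\tfrac12\log^2 h)^2} = O\!\left(\frac{1}{\log^3 h}\right) \longrightarrow 0,
\]
which gives the a.a.s.\ bound. (A cruder first-moment bound $\Prb(Z \ge \log^2 h) \le \Exp Z / \log^2 h = O(1/\log h)$ already suffices for ``a.a.s.''.)

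The only genuine content is the observation that the component count of $\widetilde G_C$ is the cycle count of the single uniform permutation $\pi = \rho\circ\tau\circ\sigma$; once that reduction is in hand, the probabilistic statement is a textbook fact about random permutations. I anticipate the main thing to get right is the bookkeeping in the structural claim — making sure the correspondence between $3\ell$-cycles of $\widetilde G_C$ and $\ell$-orbits of $\pi$ is set up cleanly, and that one has correctly argued $\widetilde G_C$ contains no $2$-cycles or multi-edges so that ``$2$-regular'' really does force ``disjoint union of cycles'' and the $\phi$-image cannot backtrack — rather than the analytic estimate, which is routine.
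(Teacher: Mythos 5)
Your proposal is correct and follows essentially the same route as the paper: reduce the cycle count of $\widetilde G_C$ to the cycle count of the uniform random permutation $\sigma_{x_3x_1}\circ\sigma_{x_2x_3}\circ\sigma_{x_1x_2}$ and then use the fact that its expected number of cycles is $\log h + O(1)$ (the paper applies Markov's inequality directly, as in your parenthetical remark; your Chebyshev refinement is valid but not needed). The extra structural bookkeeping you supply is correct and merely makes explicit what the paper states in one sentence.
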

\begin{proof}
  The cycles with length $3\ell$ of $\widetilde G_C$ correspond to the cycles with length $\ell$ of the permutation
  $\sigma_{x_1x_2}\circ\sigma_{x_2x_3}\circ\sigma_{x_3x_1}$.  The latter is a uniformly distributed random permutation of $[h]$.  It is a folklore fact (e.g.,
  \cite{LovaszCombProbExe}) that the average number of cycles of a random permutation of $[h]$ is $\log h + o(1)$.  The statement of the lemma now follows from
  Markov's inequality.
\end{proof}

Lemma~\ref{lem:ub-no-cycles} allows us to assume that
$\widetilde G_C$ has at most $\log^2h$ cycles.  As a matter of fact, this is the only statement about $\widetilde G_C$ which we need.

\section{The 3-colouring algorithm}\label{sec:the-alg}

\begin{algorithm}[htbp]
  \caption{\label{alg:3colalg}Three-Colour $\widetilde G$}%
  \begin{minipage}[c]{1.0\linewidth}
    \flushleft%
    \textbf{Phase I:}
    \begin{enumerate}[(1)]
    \item\label{enum:mainalg:start}%
      The algorithms starts with all edges in $\widetilde G_C$ exposed, but no edge in between $\widetilde G_C$ and $\widetilde G_S$ exposed.  If $\widetilde
      G_C$ has more than $\log^2h$ cycles, \textbf{fail.}
    \item\label{enum:mainalg:killCcycles}%
      Choose exactly one red vertex in each cycle of $\widetilde G_C$. 
    \end{enumerate}
    \textbf{Phase II:}
    \begin{enumerate}[(1)]\setcounter{enumi}{2}
    \item\label{enum:mainalg:setup-loop}%
      Expose all edges incident to red vertices.  If there exists a vertex in $\widetilde G_S$ which has two or more red neighbours, \textbf{fail.}  Otherwise,
      let $P(0)$ be the set of pale vertices before the first iteration.
    \item\label{enum:mainalg:loop}%
      For $t=1,\dots, \lfloor h^{\nfrac13}\rfloor$:
      \begin{enumerate}[({\ref{enum:mainalg:loop}}.1)]
      \item Let $v$ be chosen arbitrarily from the set $P(t-1)$.
      \item From the two non-exposed edges incident to $v$, expose one arbitrarily (the other edge remains unexposed).  Let $u$ be the end-vertex in $\widetilde
        G_C$ of the exposed edge.
      \item Expose the other edge incident to $u$, and let $v'$ be the corresponding neighbour of $u$ in $\widetilde G_S$.  If $v' \in \bigcup_{s=0}^{t-1}
        P(s)$, \textbf{fail.} Otherwise $P(t) = P(t-1)\cup\{v'\}\setminus\{v\}$ (this is now the new set of pale vertices).
      \item Colour $u$ red.
      \end{enumerate}
    \end{enumerate}
    \textbf{Phase III:}
    \begin{enumerate}[(1)]\setcounter{enumi}{4}
    \item\label{enum:mainalg:expose-all}%
      Expose all remaining edges.
    \item\label{enum:mainalg:remaining-red}%
      Colour every vertex red which is in $\widetilde G_S$ and does not have a red neighbour.
    \item\label{enum:mainalg:bipartite}%
      If the graph induced by the non-red vertices is 
      acyclic, colour it black and white, otherwise \textbf{fail.}
    \end{enumerate}
  \end{minipage}
\end{algorithm}

Our colouring algorithm is detailed in the box Algorithm~\ref{alg:3colalg}.  We use the colours \textit{red,} \textit{black,} and \textit{white,} where the
colour red will have a special significance.  We point the reader to the fact that, once Algorithm~\ref{alg:3colalg} has coloured a vertex, the vertex never
changes its colour or becomes uncoloured again.
A vertex of $\widetilde G_S$ which is adjacent to precisely one red vertex is called \textit{pale} (this is not a colour).

The algorithm works in three phases.  In phase~I, Steps~(\ref{enum:mainalg:start}--\ref{enum:mainalg:killCcycles}), we destroy the uncoloured cycles of
$\widetilde G_C$ by colouring one vertex per cycle red.  By Lemma~\ref{lem:ub-no-cycles}, a.a.s., we colour at most $\log^2h$ vertices red in Phase~I, i.e.,
Phase~I fails with probability $o(1)$.

In Phase~II, more accurately in the loop~(\ref{enum:mainalg:loop}), the algorithm successively chooses uncoloured vertices of $\widetilde G_C$ and colours them
red.  This is done by maintaining the set $P(\cdot)$ of pale vertices (i.e., those vertices of $\widetilde G_S$ which are adjacent to precisely one red vertex).

In Phase~III, Steps~(\ref{enum:mainalg:expose-all}--\ref{enum:mainalg:bipartite}), the remaining vertices are coloured in a straight forward way.

The rationale behind the algorithm is as follows.
  
At any fixed time between Steps~(\ref{enum:mainalg:setup-loop}) and~(\ref{enum:mainalg:expose-all}), consider the connected components of $\widetilde G_C$ after
deleting all red vertices.  These are uncoloured paths of different lengths in $\widetilde G_C$, separated by red vertices.  We call them \textit{chunks.}
These chunks can be thought of as the vertices of a multi-graph, which we call the \textit{chunk-graph}, whose edges are the pale vertices in $\widetilde G_C$:
Every pale vertex has precisely two uncoloured neighbours in $\widetilde G_C$, thus connecting the corresponding chunks.  We refer to such a connection between
chunks via a pale vertex as a \textit{chunk-edge.}  A chunk-edge may be a loop, which happens when a pale vertex have both uncoloured neighbours in the same
chunk.  Furthermore, there may be parallel chunk-edges in the chunk-graph, which happens when two pale vertices connect the same pair of chunks.
The reason why, in Step~\ref{enum:mainalg:setup-loop} of the algorithm, we abort if a vertex has two or more red neighbors, is only because such vertices would
not correspond to edges of the chunk-graph.  Indeed, at the end of Phase~II, there are only two kinds of uncolored vertices left: Those making up the chunk
graph, and those being colored red in Step~\ref{enum:mainalg:remaining-red}.

The chunk-graph is a random multi-graph.  At Step~(\ref{enum:mainalg:setup-loop}), it has as many vertices as there are cycles in $\widetilde G_C$ (at most
$\log^2h$ by Lemma~\ref{lem:ub-no-cycles}), and as many edges as there are pale vertices.
If the algorithm does not fail in Step~(\ref{enum:mainalg:setup-loop}), then to every red vertex there are two pale vertices, and they are all distinct.
Hence, at this time, there are twice as many chunk-edges as there are chunks.


When the algorithm proceeds through loop~(\ref{enum:mainalg:loop}), the number of chunks is increased as we colour more vertices of $\widetilde G_C$ red.
However, the number of pale vertices stays constant, and hence so does the number of chunk-edges.

The reasoning at this point is a heuristic analogy with the random (simple) graph model $G(n,m)$, where a set of $m$ edges is drawn uniformly at random from the
set of all possible $m$-sets of edges between $n$ vertices.  For us, $n$ is the number of chunks and $m$ is the number of chunk-edges.  At
Step~(\ref{enum:mainalg:setup-loop}), where $m=2n$, we expect the chunk-graph to contain lots of cycles (including loops and parallel edges), which makes it
unlikely that it can be coloured with just the two remaining colours.  However, when $n$ grows and $m$ stays constant, a random graph $G(n,m)$ will be acyclic
as soon as $m \ll n$, and we expect the same to be true for the chunk-graph.

There are complications in making this heuristic analogy work rigorously, the foremost being that the distribution of the edges in the chunk-graph is not
uniform but instead depends on the sizes of the chunks.  We will address these issues in the next section.


\section{Proof of correctness of the 3-colouring algorithm}

We prove that a.a.s.\ Algorithm~\ref{alg:3colalg} properly 3-colours $\widetilde G$.

\begin{lemma}\label{lem:alg-doesnt-fail-trivially}
  A.a.s., Algorithm~\ref{alg:3colalg} does not fail in Steps~(\ref{enum:mainalg:start}), (\ref{enum:mainalg:setup-loop}), or~(\ref{enum:mainalg:loop}.3).
\end{lemma}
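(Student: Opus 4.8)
The plan is to bound the failure probabilities in the three steps separately, using union bounds over the relevant small structures and the key fact (from Lemma~\ref{lem:ub-no-cycles}) that $\widetilde G_C$ a.a.s.\ has at most $\log^2 h$ cycles. For Step~(\ref{enum:mainalg:start}), this is immediate: failure there is exactly the event that $\widetilde G_C$ has more than $\log^2 h$ cycles, which has probability $o(1)$ by Lemma~\ref{lem:ub-no-cycles}. So the substance of the lemma is in Steps~(\ref{enum:mainalg:setup-loop}) and~(\ref{enum:mainalg:loop}.3).

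For Step~(\ref{enum:mainalg:setup-loop}): at this point the algorithm has exposed only the edges of $\widetilde G_C$, and has coloured one vertex red in each of the at most $\log^2 h$ cycles. These red vertices in $V_{x_1}\cup V_{x_2}\cup V_{x_3}$ were chosen independently of the permutations $\sigma_{x_i y_j}$ governing the edges between $\widetilde G_C$ and $\widetilde G_S$, which are still unexposed and uniformly random. Failure means some $v\in V_{y_1}\cup V_{y_2}$ has two red neighbours. First I would condition on the (at most $\log^2 h$) red vertices; then for each fixed vertex $v$ over $y_j$, the six edges from $v$ to the fibres $V_{x_1},V_{x_2},V_{x_3}$ land on uniformly random vertices (one in each fibre, twice over — once via $y_j$ adjacent to each $x_i$; actually $v$ has exactly three neighbours in $\widetilde G_C$, one per fibre, determined by $\sigma_{x_i y_j}^{-1}$), so the probability that a given one of these neighbours is red is $O(\log^2 h / h)$, and the probability that two of the (constantly many) neighbours of $v$ are red is $O(\log^4 h / h^2)$. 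A union bound over the $O(h)$ vertices $v$ gives failure probability $O(\log^4 h / h) = o(1)$.

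For Step~(\ref{enum:mainalg:loop}.3): the loop runs at most $\lfloor h^{1/3}\rfloor$ times, and in each iteration it exposes two new edges incident to a freshly chosen pale vertex $v$ and to its $\widetilde G_C$-neighbour $u$, producing a new vertex $v'\in\widetilde G_S$; failure occurs if $v'$ has already appeared in $\bigcup_{s<t}P(s)$. The set $\bigcup_{s=0}^{t-1}P(s)$ has size at most $2\log^2 h + t = O(h^{1/3})$ (it starts with at most $2\log^2h$ pale vertices and grows by at most one per iteration). Since only $O(h^{1/3})$ edges have been exposed so far in total, the permutation governing the edge exposed in step~(\ref{enum:mainalg:loop}.3) is, conditionally, still close to uniform on a set of size $h - O(h^{1/3})$, so $v'$ is a uniformly random vertex over the relevant $y_j$ among at least $h - O(h^{1/3})$ choices; the chance it lands in the already-seen set is $O(h^{1/3}/h) = O(h^{-2/3})$. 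A union bound over the $\le h^{1/3}$ iterations gives failure probability $O(h^{-1/3}) = o(1)$. Summing the three $o(1)$ bounds completes the proof.

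The main obstacle I anticipate is making the conditioning arguments precise: one must verify that at each point where we claim an exposed edge's endpoint is ``uniformly random on a large set,'' the information revealed so far really does leave the relevant value of $\sigma_{x_i y_j}$ uniform on the unexposed indices. This requires being careful that the red-vertex choices in Phases~I--II depend only on $\widetilde G_C$ and on previously exposed $\widetilde G_C$-to-$\widetilde G_S$ edges, never on the unexposed part of the permutations — which is exactly how the algorithm is set up (the edge-exposure discipline), but it should be stated explicitly, perhaps via a principle of deferred decisions. Everything else is a routine union bound.
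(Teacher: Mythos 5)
Your proposal is correct and follows essentially the same route as the paper: Step~(\ref{enum:mainalg:start}) is handled by Lemma~\ref{lem:ub-no-cycles}, Step~(\ref{enum:mainalg:setup-loop}) by the $O(\nfrac{\log^4h}{h^2})$ per-vertex bound and a union bound over $O(h)$ vertices, and Step~(\ref{enum:mainalg:loop}.3) by the $O(h^{-\nfrac23})$ per-iteration bound summed over $\lfloor h^{\nfrac13}\rfloor$ iterations. Your added care about the exposure discipline (that the red choices depend only on already-exposed edges, so unexposed permutation values remain uniform) is exactly the implicit justification the paper relies on but does not spell out.
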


\begin{proof}
Lemma~\ref{lem:ub-no-cycles} implies that, a.a.s., 
   the algorithm does not fail in Step~(\ref{enum:mainalg:start}).

For Step~(\ref{enum:mainalg:setup-loop}), note that, 
   at this point in the algorithm, the probability that a fixed vertex in $\widetilde G_S$ has two or more
   red neighbours is $O( \nfrac{(\log^4h)}{h^2})$.  
Hence, the probability that there exists such a vertex having two or more red neighbours is
  $O(\nfrac{(\log^4h)}{h}) = o(1)$.
  
For Step~(\ref{enum:mainalg:loop}.3), we see that for each fixed $t$, 
   the probability that $v'\in \bigcup_{s=0}^{t-1} P(s)$ is $O(h^{-\nfrac23})$.  
Thus, the probability that the algorithm fails after at most $t$ iterations is $O(t h^{-\nfrac23})$.  Consequently, the probability that the algorithm fails at
  Step~(\ref{enum:mainalg:loop}.3) before completing $t:=\lfloor h^{\nfrac13} \rfloor$ iterations is $o(1)$.
\end{proof}

Denote by $T$ the last iteration (value of $t$) of the loop~(\ref{enum:mainalg:loop}) which is completed (without failing).
We let $R(t)$, $t=0,1,\dots,T$ be the set of vertices which are red after $t$ iterations of the loop~(\ref{enum:mainalg:loop}).  In particular, $R(0)$ is the
set of vertices coloured red in Step~(\ref{enum:mainalg:killCcycles}).  Let $R^+(t) := R(t)\setminus R(0)$.  Recall that adding an index to a letter denoting a
set refers to taking its intersection with the corresponding fibre, for example $R_x(t)$ refers to $V_x\cap R(t)$.
Moreover, we use the following notation to refer to the cardinalities of each of these sets: If a set is denoted by an upper-case letter (possibly with sub- or
superscript or followed by parentheses), the corresponding lower-case letter (with the same sub- or superscripts or parentheses) denotes its cardinality.  For
example $r_x(t) = \sabs{R_x(t)}$.  We have the following.

\begin{lemma}\label{lem:red-Rx-indep-uniform}
  For each $x\in C$ and $t=1,\dots,T$, set $R^+_x(t)$ is uniformly distributed in the set of all $(r^+_x(t))$-element subsets of $V_x\setminus R_x(0)$.
\end{lemma}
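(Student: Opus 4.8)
The plan is to prove the statement by induction on $t$, exposing the edges of $\widetilde G$ in exactly the order dictated by the algorithm and tracking, at each step, the conditional distribution of $R^+_x(t)$ given all the information the algorithm has used so far. The key observation is that the permutations $\sigma_e$ are independent across edges $e$ of $G$, and that exposing one value $\sigma_e(j)$ of a uniform random permutation leaves the remaining values uniformly distributed on the remaining targets; more to the point, by symmetry the \emph{identity} of the newly exposed target is uniform among the as-yet-unused elements of the target fibre, independently of which source vertex $j$ was queried. This is the engine that keeps $R^+_x(t)$ ``uniformly spread'' inside $V_x\setminus R_x(0)$.

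First I would set up the filtration: let $\mathcal F_t$ be the $\sigma$-algebra generated by everything exposed up to and including iteration $t$ of loop~(\ref{enum:mainalg:loop}) --- namely all of $\widetilde G_C$ (hence $R(0)$, which is a deterministic function of $\widetilde G_C$ once the arbitrary choices in Step~(\ref{enum:mainalg:killCcycles}) are fixed), the edges exposed in Step~(\ref{enum:mainalg:setup-loop}), and for each $s\le t$ the two edges at $u$ exposed in sub-steps~(\ref{enum:mainalg:loop}.2)--(\ref{enum:mainalg:loop}.3). The base case $t=0$: after Step~(\ref{enum:mainalg:setup-loop}) the set $R^+_x(0)=\emptyset$, so there is nothing to prove; but I would in fact prove the stronger statement for $t\ge 0$ that, \emph{conditioned on $\mathcal F_t$ and on the event that the algorithm has not failed}, the set $R^+_x(t)$ is uniform among the $r^+_x(t)$-subsets of $V_x\setminus R_x(0)$, \emph{jointly with the natural conditional-uniformity of the unexposed edges}. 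Then the inductive step is: in iteration $t+1$ the algorithm picks a pale vertex $v\in P(t)$, exposes one of its two unexposed edges to land on some $u\in V_x$ for one of the three $x\in C$; given $\mathcal F_t$ and the non-failure so far, $u$ is uniform among the vertices of $V_x\setminus R_x(t)$ not already incident to an exposed edge of that fibre --- and the point is that $u$ is thereby uniform in $V_x\setminus(R_x(0)\cup R^+_x(t))$ except on a lower-order ``already touched'' set, which one handles by a short symmetry argument (or, cleaner, by noting that ``$v$ is pale'' and ``$v$'s $\widetilde G_C$-neighbours are uncoloured'' are the only constraints, and these are symmetric under relabelling $V_x\setminus R_x(0)$). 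Adding this uniform $u$ to $R^+_x(t)$ and recolouring preserves the uniform-subset property, which is the inductive claim for $t+1$. The fibres of the other two cycle-vertices are untouched in this iteration, so their distributions carry over verbatim.

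The main obstacle I expect is the bookkeeping around ``already exposed'' vertices: when the algorithm reaches iteration $t+1$, some vertices of $V_x$ are neither red nor free --- they are endpoints of edges exposed in Step~(\ref{enum:mainalg:setup-loop}) or in earlier iterations --- and a priori $u$ is uniform only among the genuinely untouched vertices, not among all of $V_x\setminus R_x(t)$. The clean way around this is to carry in the induction hypothesis the joint statement that, conditioned on $\mathcal F_t$, the restriction of each $\sigma_{xy}$ ($x\in C$, $y\in S$) to its unexposed part is a uniform bijection onto the corresponding unexposed part of $V_x$ --- this is exactly the standard ``deferred decisions'' invariant for random permutations --- and then the uniformity of $u$ among \emph{all} of $V_x\setminus R_x(0)$ (not just the untouched part) follows because any fixed element of $V_x\setminus R_x(0)$ that has \emph{not} yet been hit by an exposed edge is equally likely to be $u$, while the already-hit ones are exactly $R_x(0)\cup R^+_x(t)$ together with the $O(\log^2 h)$ Step-(\ref{enum:mainalg:setup-loop}) endpoints and the at-most-$t$ earlier iteration endpoints; conditioning further on the identities of those hit-but-not-red vertices (which $\mathcal F_t$ already records) leaves $R^+_x(t)$ uniform among $r^+_x(t)$-subsets of the \emph{complement of all of those}, and a one-line averaging over the hit-but-not-red set recovers the statement as phrased. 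I would remark that the exact finite-$h$ distribution, not just an asymptotic statement, comes out of this argument, which is what the later counting estimates (e.g.\ in Lemma~\ref{lem:alg-doesnt-fail-trivially}'s style of reasoning, and the chunk-graph analysis) will need.
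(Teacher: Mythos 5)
Your proof is correct and follows essentially the same route as the paper's: induction on $t$ together with the deferred-decisions observation that the newly exposed endpoint $u$ is uniform on the unexposed part of $V_x$. The ``hit-but-not-red'' complication you guard against is actually vacuous: every exposed edge between $\widetilde G_C$ and $\widetilde G_S$ has its $\widetilde G_C$-endpoint coloured red by the end of the step in which it is exposed, so the untouched part of $V_x$ is exactly $V_x\setminus R_x(t-1)$ --- which is why the paper's own proof is only three sentences long.
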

\begin{proof}
%
  Fix an $x \in C$.  In every iteration of the loop~(\ref{enum:mainalg:loop}) in which the fibre over $x\in C$ is selected in Step~(\ref{enum:mainalg:loop}.3),
  when exposing the edge in Step~(\ref{enum:mainalg:loop}.3), the vertex $u$ is selected uniformly at random from the set of all previously uncoloured vertices
  in $V_x$.
   In other words, for every fixed value of $R^+_x(t-1)$, the distribution of $u$ is uniform.  By induction, $R^+_x(t)$ is uniformly distributed.
\end{proof}

\begin{lemma}\label{lem:red-not-adj}
  In the loop~(\ref{enum:mainalg:loop}) of Algorithm~\ref{alg:3colalg}, a.a.s.\ no two adjacent vertices are coloured red.
\end{lemma}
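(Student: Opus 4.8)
The plan is a first-moment argument over the iterations of the loop~(\ref{enum:mainalg:loop}). First I would reduce the claim to a statement inside $\widetilde G_C$. Every vertex that is red at the end of Phase~II lies in $\widetilde G_C$: the ones coloured in Step~(\ref{enum:mainalg:killCcycles}) by construction, and each $u$ coloured in Step~(\ref{enum:mainalg:loop}.4) because, by definition, it is the $\widetilde G_C$-endpoint of an exposed edge. Since $S$ is a stable set, $\widetilde G_S$ carries no edges, so two such red vertices are adjacent only if they are adjacent in $\widetilde G_C$; the vertices coloured red later, in Step~(\ref{enum:mainalg:remaining-red}), lie in $\widetilde G_S$ and by the rule of that step have no red neighbour at all. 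Moreover the Phase~I red vertices sit one per cycle of $\widetilde G_C$ and distinct cycles are vertex-disjoint, so those alone are never adjacent. Hence it suffices to show that a.a.s., for every completed iteration $t$, the vertex $u$ coloured red in iteration~$t$ is not $\widetilde G_C$-adjacent to any vertex of $R(t-1)$.

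Next I would fix $t\le\lfloor h^{\nfrac13}\rfloor$ and condition on the history $\mathcal F_{t-1}$ of all choices and edge-exposures made before iteration~$t$, on the event that the loop reaches iteration~$t$; say the fibre selected in Step~(\ref{enum:mainalg:loop}.2) is $V_x$ with $x\in C$. The key input is that, conditionally on $\mathcal F_{t-1}$, the vertex $u$ is uniformly distributed over $V_x\setminus R_x(t-1)$ --- this is precisely the content of (the proof of) Lemma~\ref{lem:red-Rx-indep-uniform}. What makes it true is the bookkeeping observation that throughout the loop an edge between $\widetilde G_C$ and $\widetilde G_S$ is exposed if and only if it is incident to a red vertex of $\widetilde G_C$: such edges are exposed only in Step~(\ref{enum:mainalg:setup-loop}) and in Steps~(\ref{enum:mainalg:loop}.2)--(\ref{enum:mainalg:loop}.3), in every case at a vertex that is, or is about to be, coloured red. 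Consequently, within $V_x$ a vertex is non-red precisely when it is not yet incident to an exposed edge between $\widetilde G_C$ and $\widetilde G_S$, and the still-unexposed part of the permutation governing those edges is a uniformly random bijection, which yields the claimed distribution of $u$.

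Then the estimate is routine. The vertex $u$ yields two adjacent red vertices precisely when it is $\widetilde G_C$-adjacent to a vertex of $R(t-1)$; since each vertex of $R(t-1)$ lies in $\widetilde G_C$ with exactly two neighbours there, at most $2\,r(t-1)$ vertices of $V_x$ are $\widetilde G_C$-adjacent to $R(t-1)$. Because the loop was entered, Step~(\ref{enum:mainalg:start}) did not fail, so $r(0)\le\log^2 h$ and hence $r(t-1)=r(0)+(t-1)\le\log^2 h+h^{\nfrac13}$, while $\sabs{V_x\setminus R_x(t-1)}\ge h-\log^2 h-h^{\nfrac13}\ge h/2$ for $h$ large. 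Thus, conditionally on $\mathcal F_{t-1}$, the probability that $u$ is $\widetilde G_C$-adjacent to $R(t-1)$ is $O(h^{-\nfrac23})$, and summing over the at most $\lfloor h^{\nfrac13}\rfloor$ iterations bounds the probability of ever creating such a pair by $O(h^{\nfrac13}\cdot h^{-\nfrac23})=O(h^{-\nfrac13})=o(1)$.

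The only genuinely delicate point is the conditional uniformity of $u$; once that is supplied by Lemma~\ref{lem:red-Rx-indep-uniform} together with the observation about which edges between $\widetilde G_C$ and $\widetilde G_S$ have been exposed, the rest is a one-line union bound, and I do not expect any further obstacle.
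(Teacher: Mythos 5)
Your proof is correct, and it rests on the same key ingredient as the paper's --- the uniformity of the newly coloured vertex $u$ over the uncoloured part of its fibre, i.e., Lemma~\ref{lem:red-Rx-indep-uniform} --- but it organizes the first-moment bound differently. The paper argues \emph{statically}: it fixes two fibres $V_{x_1},V_{x_2}$, waits until the loop has terminated, and uses the uniform distribution of $R^+_{x}(T)$ to bound the expected number of the $h$ matching edges between the two fibres whose endpoints are both red by $h\cdot\frac{T}{h-r_{x_1}(0)}\cdot\frac{T}{h-r_{x_2}(0)}=O(h^{\nfrac53}/h^2)=o(1)$. You argue \emph{dynamically}: at each iteration $t$ you condition on the history, observe that $u$ is uniform on $V_x\setminus R_x(t-1)$ while only $O(r(t-1))=O(h^{\nfrac13})$ vertices of that fibre are $\widetilde G_C$-adjacent to an already-red vertex, and sum the resulting $O(h^{-\nfrac23})$ failure probabilities over the $\lfloor h^{\nfrac13}\rfloor$ iterations. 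The two computations give the same $O(h^{-\nfrac13})$ bound. Your sequential version has two small advantages: it handles adjacency between a loop-coloured vertex and a Phase~I vertex of $R(0)$ in the same stroke, and it avoids having to justify the (harmless but implicit) product of probabilities across the two fibres in the paper's display, since you only ever condition on the past. Your bookkeeping observation --- that an edge between $\widetilde G_C$ and $\widetilde G_S$ is exposed exactly when it is incident to a red vertex of $\widetilde G_C$, so the unexposed part of each permutation remains a uniform bijection --- is exactly the right justification for the conditional uniformity, and is the same mechanism underlying the paper's Lemma~\ref{lem:red-Rx-indep-uniform}. One cosmetic remark: since each vertex of $R(t-1)$ has exactly one $\widetilde G_C$-neighbour in the fibre $V_x$ under consideration (the lift of an edge of $C$ is a perfect matching between fibres), your count $2\,r(t-1)$ can be sharpened to $r(t-1)$, though this changes nothing asymptotically.
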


\begin{proof}
  Let $x_1,x_2\in C$, and consider the situation after $T$ iterations, i.e., when the algorithm leaves the loop~(\ref{enum:mainalg:loop}).  By
  Lemma~\ref{lem:red-Rx-indep-uniform}, at this time, the expected number of edges between $V_{x_1}$ and $V_{x_2}$ both of whose end vertices are red is at most
  \begin{equation*}
    h\cdot \frac{T}{h-r_{x_1}(0)} \cdot \frac{T}{h-r_{x_2}(0)}
    \quad=\quad
    O\biggl( \frac{h^{\nfrac53}}{\bigl( h-\log^2 h \bigr)^2} \biggr) 
    \;=\;
    o(1).
  \end{equation*}
\end{proof}

Now, it only remains to show that when Step~(\ref{enum:mainalg:bipartite}) of Algorithm~\ref{alg:3colalg} is reached, the graph consisting of the yet uncoloured
vertices is a.a.s.\ acyclic.  

Now, suppose that the algorithm has completed Phase~II without failing, i.e., we find ourselves just before Step~(\ref{enum:mainalg:expose-all}).
Let $H$ denote the chunk graph as we defined in Section~3.  Thus $H$ is a random multi-graph with $n \le r(T) = T + r(0) = \Theta( h^{\nfrac13} )$ vertices and
$m := p(T) = 2r(0) = O(\log^2 h)$ edges.  In fact, if no two red vertices are adjacent, the first inequality becomes an equation,
cf.~Lemma~\ref{lem:red-not-adj}.  The distribution of $H$ can be described in terms of random permutations taking into account the edges which have already been
exposed, and the sizes of the chunks.  It appears sensible to guess that $H$ has no cycles.  That is in fact correct.

\subsubsection*{\bf Sizes of the chunks}
The first thing we require to turn this analogy into a rigorous proof is an upper bound on the sizes of the chunks.  We find it convenient to reduce the
question to the distribution of the gaps between $n$ points drawn uniformly at random from the interval $[0,1]$.  There, the probability that two consecutive
points enclose a gap of size $a$ is $(1-a)^n$, which yields an upper bound of, say, $\nfrac{(2h\log n)}{n}$ for the largest gap, a.a.s.
In the following lemmas, we put this plan into action.

Let $n$ numbers $Y_1,\dots,Y_n$ be drawn independently uniformly at random from $[N]$, where $N$ is a function of $n$.  Let $S_k$ be the $k$-th order
statistics (i.e., $0\le S_1\le\dots\le S_n\le 1$, and $\{S_1,\dots, S_n\} = \{Y_1,\dots,Y_n\}$) and set $S_0 := 0$ and $S_{n+1} := N$.

We determine the distribution of $S_{k+1}-S_k$.  This can be done directly, but it can also easily be derived from the Bapat-Beg theorem, of which the following
is a special case (see the appendix for a proof).

\begin{lemma}\label{lem:distrib-gaps-points-interval}
  Let $X_1,\dots,X_n$ be points drawn independently uniformly at random in $[0,1]$ and denote by $S'_k$ the $k$-th order statistics.  With $S'_0 := 0$ and
  $S'_{n+1} := 1$, 
  for each $k=0,\dots,n$, the distribution of $S'_{k+1}-S'_k$ is as follows: $\Prb[S'_{k+1}-S'_k > a] = (1-a)^n$.
  \qed
\end{lemma}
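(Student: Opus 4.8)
The plan is to use the \emph{exchangeability of the spacings}. Write $Z_i := S'_i - S'_{i-1}$ for $i=1,\dots,n+1$, so that $Z_i\ge 0$ and $\sum_{i=1}^{n+1} Z_i = 1$; the claim is exactly that $\Prb[Z_{k+1} > a] = (1-a)^n$ for each $k$. The first step is to show that the joint law of $(Z_1,\dots,Z_{n+1})$ is invariant under permutations of its coordinates. The quickest route: the order statistics $(S'_1,\dots,S'_n)$ have joint density $n!$ on $\{0\le s_1\le\dots\le s_n\le 1\}$, and the linear map $(s_1,\dots,s_n)\mapsto(z_1,\dots,z_n)$ with $z_i = s_i - s_{i-1}$ (and $z_{n+1}:=1-\sum_{i\le n}z_i$ determined) has Jacobian $1$; hence $(Z_1,\dots,Z_{n+1})$ is uniform on the standard simplex $\{z\ge 0,\, \sum_i z_i=1\}$, which is manifestly symmetric. (A rotational-symmetry argument on the circle of circumference $1$ gives the same conclusion.)

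Granting the symmetry, $Z_{k+1}$ has the same distribution as $Z_1 = S'_1 = \min\{X_1,\dots,X_n\}$, and $\Prb[\min_i X_i > a] = \prod_{i=1}^n\Prb[X_i > a] = (1-a)^n$ for $a\in[0,1]$ (trivially consistent with the stated formula for $a\le 0$ and $a\ge 1$). That is the whole proof.

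Should one prefer to bypass the symmetry step, the same value drops out of a direct computation for $1\le k\le n-1$ (the cases $k=0$ and $k=n$ being the $\min$ and $1-\max$ cases just treated): conditioning on $S'_k=s$, the $n-k$ points lying above $s$ are i.i.d.\ uniform on $[s,1]$, so the conditional probability that $S'_{k+1}>s+a$ equals $\bigl((1-s-a)/(1-s)\bigr)^{n-k}$; integrating this against the density $\frac{n!}{(k-1)!(n-k)!}s^{k-1}(1-s)^{n-k}$ of $S'_k$ over $s\in[0,1-a]$, cancelling the $(1-s)^{n-k}$ factors, and substituting $s=(1-a)u$ turns the integral into $(1-a)^n\,\frac{n!}{(k-1)!(n-k)!}\int_0^1 u^{k-1}(1-u)^{n-k}\,du = (1-a)^n$.

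I do not anticipate a genuine obstacle; the only points needing any care are the justification of the exchangeability (equivalently, the Jacobian of the spacings map) and the boundary cases $k\in\{0,n\}$, both of which are routine. Deriving the statement from the Bapat-Beg theorem, as the appendix does, is a legitimate alternative but brings in heavier machinery than this special case requires.
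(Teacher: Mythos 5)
Your proof is correct, but it takes a genuinely different route from the paper's. The paper proves the lemma by brute force: it partitions $[0,1]^n$ into the $n!$ orderings, writes $\Prb[S'_{k+1}-S'_k\le a]$ as an explicit double integral over the ordered simplex, and evaluates it via Beta-type integrals (this is also why the appendix frames the result as a special case of the Bapat--Beg theorem). Your primary argument instead observes that the spacings $(Z_1,\dots,Z_{n+1})$ are exchangeable --- the spacings map is unimodular, so the vector is uniform on the simplex $\{z\ge 0,\ \sum_i z_i=1\}$ --- and hence $Z_{k+1}\sim Z_1=\min_i X_i$, whose tail is immediately $(1-a)^n$. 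This is shorter and more conceptual; it also explains \emph{why} the answer does not depend on $k$, which the paper's computation only reveals after the dust settles. The one point you should not wave away too quickly is that exchangeability of the full $(n+1)$-vector requires checking that swapping $Z_{n+1}$ with one of $Z_1,\dots,Z_n$ preserves the law (the affine map $z_1\mapsto 1-\sum_{i\le n}z_i$ has Jacobian $\pm1$ and maps the solid simplex onto itself), since the "uniform on the simplex" description of an $(n+1)$-vector supported on an $n$-dimensional set deserves a sentence of justification; this is routine but worth writing down. Your fallback direct computation (condition on $S'_k=s$, use that the $n-k$ upper points are i.i.d.\ uniform on $[s,1]$, integrate against the Beta density of $S'_k$) is also correct and is essentially a reorganized version of the paper's calculation. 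Either of your arguments would serve as a valid, and arguably cleaner, replacement for the appendix proof.
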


For the discrete version we obtain the following.

\begin{lemma}\label{lem:distrib-gaps-points-discrete}
  For every $a > 0$, we have
  \begin{equation*}
    \Prb[ S_{k+1} - S_k   >   \tfrac{aN}{n} ]      \le   e^{-a + O(\nfrac{n}{N})},
  \end{equation*}
  (with an absolute constant in the $O(\cdot)$).
\end{lemma}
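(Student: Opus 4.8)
The plan is to derive the discrete statement (Lemma~\ref{lem:distrib-gaps-points-discrete}) from the continuous one (Lemma~\ref{lem:distrib-gaps-points-interval}) by a coupling argument. First I would observe that drawing $Y_1,\dots,Y_n$ uniformly from $[N]=\{1,\dots,N\}$ can be coupled with drawing $X_1,\dots,X_n$ uniformly from $[0,1]$ by setting $Y_i := \lceil N X_i \rceil$ (so $Y_i = j$ iff $X_i \in ((j-1)/N, j/N]$), which is clearly the correct distribution on $[N]$. Under this coupling, the order statistics satisfy $S_k = \lceil N S'_k \rceil$ for $1\le k\le n$ (order is preserved up to ties, which occur with probability zero for the $X_i$), and with the convention $S_0=0=S'_0$ and $S_{n+1}=N=N\cdot S'_{n+1}$ one has $|S_{k+1}-S_k| \le N(S'_{k+1}-S'_k) + 1$ for all $k=0,\dots,n$, simply because $\lceil N b\rceil - \lceil N a\rceil \le N(b-a) + 1$.

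Given this, the event $S_{k+1}-S_k > aN/n$ is contained in the event $N(S'_{k+1}-S'_k) + 1 > aN/n$, i.e. $S'_{k+1}-S'_k > a/n - 1/N = (a - n/N)/n$. Applying Lemma~\ref{lem:distrib-gaps-points-interval} with the threshold $(a-n/N)/n$ in place of its "$a$" gives
\begin{equation*}
  \Prb[ S_{k+1} - S_k > \tfrac{aN}{n} ]
  \;\le\;
  \Bigl( 1 - \tfrac{a - n/N}{n} \Bigr)^{n}
  \;\le\;
  \exp\Bigl( -(a - n/N) \cdot \tfrac{n}{n} \Bigr)
  \;=\;
  e^{-a + n/N},
\end{equation*}
using $1-x \le e^{-x}$, which is exactly the claimed bound $e^{-a + O(n/N)}$ (indeed with absolute constant $1$). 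One should note the degenerate cases: if $a \le n/N$ the right-hand side $e^{-a+n/N} \ge 1$ and the bound is trivial, and if $a/n - 1/N \ge 1$ the gap event is impossible; in the remaining range the inequality $1-x\le e^{-x}$ applies cleanly.

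The only genuine subtlety — the "main obstacle," though it is minor — is getting the boundary conventions and rounding direction right so that $S_{k+1}-S_k \le N(S'_{k+1}-S'_k)+1$ holds uniformly over $k=0,\dots,n$, including the two endpoint gaps involving $S_0$ and $S_{n+1}$; with the ceiling convention $Y_i=\lceil NX_i\rceil$ this works because $S_0 = 0 = \lceil N\cdot 0\rceil$ and $S_{n+1}=N=\lceil N\cdot 1\rceil$ fit the same formula. Everything else is the one-line estimate $1-x\le e^{-x}$. I would write the proof in three short sentences: state the coupling, record the gap inequality, then substitute into Lemma~\ref{lem:distrib-gaps-points-interval}.
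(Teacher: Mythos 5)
Your proof is correct and follows essentially the same route as the paper: couple the discrete sample to the continuous one via $Y_j=\lceil NX_j\rceil$, transfer the gap event to the order statistics of the $X_j$ with a small additive rounding loss, and apply Lemma~\ref{lem:distrib-gaps-points-interval} together with $1-x\le e^{-x}$. The only (immaterial) difference is that you lose $1$ in the rounding step where the paper loses $2$, so your constant in the $O(\nfrac{n}{N})$ term is slightly better.
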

\begin{proof}
  Let $X_1,\dots,X_n$ be drawn independently uniformly at random from $[0,1]$.  
  We can assume
  that the $Y$s are the $X$s multiplied by $N$
  and then rounded up: $Y_j = \lceil NX_j \rceil$.  We also assume that the permutation taking the $X$s to the $S'$s is equal to the permutation taking the $Y$s
  to the $S$s (this condition makes sense when two $Y$s coincide).
By Lemma~\ref{lem:distrib-gaps-points-interval},
  we conclude that
  \begin{multline*}
    \Prb[ S_{k+1} - S_k   >   \tfrac{aN}{n} ]
    \quad\le\quad
    \Prb[ S'_{k+1} - S'_k   >   (\tfrac{aN}{n}-2)/N ]
    \\
    =
    (1- (\nfrac{a}{n}-\nfrac{2}{N}))^n
    \;\le\;
    e^{-a + \nfrac{2n}{N}}.
  \end{multline*}
\end{proof}

From this, we conclude the following.

\begin{lemma}\label{lem:max-gaps-subset}
  Let an $n$-subset $R$ be drawn uniformly at random from all the $n$-subsets of $[N]$, and $a >0$.
  The probability that there are $\lceil aN/n \rceil$ consecutive numbers not in $R$ is at most $(n+1)e^{-a + O(\nfrac{n}{N})}$.
\end{lemma}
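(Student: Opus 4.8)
The plan is to avoid the wasteful union bound over the (roughly $N$) possible starting positions of a run of missing integers --- which would only give the useless estimate $Ne^{-a}$ --- and instead to union-bound over the $n+1$ \emph{maximal} blocks of missing integers, indexed by rank. The clean bookkeeping device is to record $R$ by its vector of gaps. Write $R=\{s_1<\dots<s_n\}$, set $s_0:=0$ and $s_{n+1}:=N+1$, and for $k=0,\dots,n$ let $g_k:=s_{k+1}-s_k-1$ be the number of integers strictly between $s_k$ and $s_{k+1}$, so that $g_0$ counts the integers missing before $s_1$ and $g_n$ those missing after $s_n$. Then $\sum_{k=0}^n g_k=N-n$, and by the standard stars-and-bars bijection, when $R$ is uniform over the $n$-subsets of $[N]$ the vector $(g_0,\dots,g_n)$ is uniform over all nonnegative integer solutions of that equation.

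The first step is the elementary observation that there are $\lceil aN/n\rceil$ consecutive integers not in $R$ if and only if $g_k\ge\lceil aN/n\rceil$ for some $k\in\{0,\dots,n\}$. The second step is a direct count: for a fixed $k$ and any $L\ge 0$,
\[
  \Prb[g_k\ge L]\;=\;\binom{N-L}{n}/\binom{N}{n}\;=\;\prod_{i=0}^{n-1}\frac{N-L-i}{N-i}\;\le\;\Bigl(1-\frac LN\Bigr)^{n}\;\le\;e^{-Ln/N}.
\]
Taking $L=\lceil aN/n\rceil\ge aN/n$ and summing over the $n+1$ ranks yields $\Prb[\text{there are }\lceil aN/n\rceil\text{ consecutive integers not in }R]\le (n+1)e^{-a}$, which is in fact slightly stronger than the stated bound.

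If one instead wishes to stay within the framework of the preceding lemmas and deduce the statement from Lemma~\ref{lem:distrib-gaps-points-discrete}, one can couple $R$ with $n$ points drawn i.i.d.\ uniformly from $[N]$, conditioned on the event $\mathcal D$ that they are pairwise distinct: $\Prb[\mathcal D]=\prod_{i=1}^{n-1}(1-\nfrac iN)$, so $1/\Prb[\mathcal D]\le e^{O(\nfrac{n^2}{N})}$; under $\mathcal D$ the points form a uniform $n$-subset, and the length of its $k$-th missing block is at most the order-statistic gap $S_{k+1}-S_k$. Lemma~\ref{lem:distrib-gaps-points-discrete}, applied with $a$ shifted down by $\nfrac nN$ to absorb the ceiling, bounds each of the $n+1$ terms by $e^{-a+O(\nfrac nN)}$, and the union bound finishes it. Either way, the one thing to be careful about --- the only real obstacle --- is keeping the union bound indexed by the $n+1$ gap-ranks rather than by block positions (and, on the second route, the benign with-/without-replacement passage); the rest is routine.
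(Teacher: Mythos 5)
Your first argument is correct and is a genuinely different route from the paper's. The paper deduces the lemma from Lemma~\ref{lem:distrib-gaps-points-discrete} (i.i.d.\ draws from $[N]$, themselves reduced to the continuous order statistics of Lemma~\ref{lem:distrib-gaps-points-interval}) and handles the passage from sampling with replacement to the uniform $n$-subset by a conditioning trick: with $A$ the event that the draws are distinct and $B$ the large-gap event, it observes $\Prb(B\mid A)\le\Prb(B\mid\bar A)$, so $\Prb(B\mid A)\le\Prb(B)$ and no price is paid for the conditioning. Your main route instead computes $\Prb[g_k\ge L]=\binom{N-L}{n}/\binom{N}{n}\le(1-L/N)^n$ directly via stars and bars, which is more elementary, bypasses both preceding lemmas, and yields the slightly stronger bound $(n+1)e^{-a}$ with no $O(\nfrac{n}{N})$ correction; the key idea shared with the paper --- union-bounding over the $n+1$ gap ranks rather than over starting positions --- is present and correctly executed. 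One caveat on your alternative second route: dividing by $\Prb[\mathcal D]$ costs a factor $e^{O(\nfrac{n^2}{N})}$, not $e^{O(\nfrac{n}{N})}$, so as written that version proves a slightly weaker bound than stated; the paper's monotonicity observation is precisely what avoids this loss. Since your primary argument stands on its own (and is stronger), this does not affect the correctness of the proposal.
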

\begin{proof}
  Let $b := \lceil aN/n \rceil$, and let $Y_1,\dots,Y_n$ be drawn independently uniformly at random from $[N]$.  Let $A$ be the event that the $Y_j$'s are all
  distinct, $\bar A$ its complement, and let $B$ be the event that there are $b$ consecutive numbers not containing any of the $Y_j$'s.  Since $\Prb(B)$ is a
  convex combination of $\Prb(B|A)$ and $\Prb(B|\bar A)$, and $\Prb(B)\le (n+1)e^{-a + O(\nfrac{n}{N})}$ by Lemma~\ref{lem:distrib-gaps-points-discrete}, this
  upper bound must also be true for the smaller of the two conditional probabilities.  But, clearly
  $\Prb(B|A) \le \Prb(B|\bar A)$.
\end{proof}

We can now prove the upper bound on the sizes of the chunks.  

\begin{lemma}\label{lem:max-gaps-red}
  Let $\omega \xrightarrow{\;\scriptscriptstyle h\;}\infty$ arbitrarily slowly.  If $n$ is the number of red vertices in $\widetilde G_C$ at the completion of
  Phase~II of the algorithm, a.a.s.\ as $h\to\infty$, there is no chunk with size larger than $6(\omega+\log n)h/n$.
\end{lemma}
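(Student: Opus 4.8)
The plan is to reduce the statement about chunk sizes to the order-statistics machinery just developed, in particular Lemma~\ref{lem:max-gaps-subset}. First I would recall what a chunk is: after Phase~II, the red vertices in $\widetilde G_C$ split each cycle of $\widetilde G_C$ into uncoloured paths, the chunks, and a chunk is large precisely when there is a long run of consecutive uncoloured vertices along one of the cycles of $\widetilde G_C$. Fix a cycle of $\widetilde G_C$; say it has length $3\ell$ and it meets the fibre $V_x$ in $\ell$ vertices for each $x\in C$. The red vertices on this cycle are exactly those in $R(T)$ lying on it, and by Lemma~\ref{lem:red-not-adj} we may assume a.a.s.\ that no two red vertices are adjacent, so a chunk of size larger than $6(\omega+\log n)h/n$ corresponds to a run of that many consecutive non-red vertices on some cycle.

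Next I would bring in the distributional input. By Lemma~\ref{lem:red-Rx-indep-uniform}, for each $x\in C$ the set $R^+_x(T)$ of vertices of $V_x$ coloured red during the loop is a uniformly random subset of $V_x\setminus R_x(0)$ of the appropriate size; adding back the $O(\log^2 h)$ vertices of $R(0)$ changes nothing asymptotically. The subtlety is that a chunk lives on a single cycle of $\widetilde G_C$, and the positions along a cycle interleave the three fibres in a fixed pattern determined by the permutations $\sigma_{x_1x_2},\sigma_{x_2x_3},\sigma_{x_3x_1}$, which were exposed in Phase~I. However, \emph{conditionally} on those permutations (hence on the cyclic structure of $\widetilde G_C$), the red set restricted to a fixed cycle is still a uniformly random subset of the vertices of that cycle of the right cardinality — this is where I would have to be a little careful, since Lemma~\ref{lem:red-Rx-indep-uniform} is stated per fibre, and I want uniformity on the vertex set of one cycle. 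The cleanest way is to observe that it suffices to bound, for each cycle separately, the longest run of non-red vertices, and that on a cycle of length $N'=3\ell$ the red vertices form a uniformly random $n'$-subset where $n'$ is their (random but concentrated) number, with $n'/N' \approx n/(3h)$ up to lower-order fluctuations; then apply Lemma~\ref{lem:max-gaps-subset} with $N\leftarrow N'$, $n\leftarrow n'$, and $a\leftarrow$ a constant multiple of $\omega+\log n$.

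Then the computation is routine: Lemma~\ref{lem:max-gaps-subset} gives that the probability of a run of $\lceil aN'/n'\rceil$ consecutive non-red vertices on a fixed cycle is at most $(n'+1)e^{-a+o(1)}$. Taking $a := 2(\omega+\log n)$, say, and noting $aN'/n' \le 6(\omega+\log n)h/n$ for $h$ large (using $N'/n' \le 3h/n \cdot (1+o(1))$ and being generous with constants — this is exactly why the constant $6$ appears rather than something sharper), this probability is at most $(n'+1)e^{-2\omega}n^{-2}e^{o(1)} = o(n^{-1})$ since $n'\le n$ and $\omega\to\infty$. Finally I would union-bound over the at most $\log^2 h$ cycles (Lemma~\ref{lem:ub-no-cycles}), or more crudely over the $n \le r(T) = \Theta(h^{1/3})$ red vertices, each of which bounds at most two chunks; either way the failure probability is $o(1)$, giving the claim.

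The main obstacle I anticipate is the conditioning issue in the second paragraph: making precise that, given the Phase~I permutations and given the numbers $r^+_x(T)$, the red set along a fixed cycle of $\widetilde G_C$ is uniform among subsets of the correct size, and that the per-fibre counts are concentrated tightly enough that $n'/N'$ on each individual cycle is within a constant factor of the global ratio $n/(3h)$. Once that is nailed down, everything else is a direct substitution into Lemma~\ref{lem:max-gaps-subset} followed by a union bound, and the loose constant $6$ in the statement gives plenty of room to absorb the lower-order slack.
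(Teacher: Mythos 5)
Your overall strategy---reduce the chunk-size bound to Lemma~\ref{lem:max-gaps-subset}---is the paper's strategy, but your reduction is per cycle, and as written it rests on two claims that you flag but do not establish, one of which is false as stated. First, the assertion that, conditionally on the Phase~I permutations, the red set restricted to a fixed cycle is a \emph{uniformly} random $n'$-subset of that cycle's vertex set does not follow from Lemma~\ref{lem:red-Rx-indep-uniform} and is not true: uniformity holds separately within each fibre $V_x$, and the trace of $R^+_x(T)$ on a cycle is governed by (roughly hypergeometric) projections fibre by fibre, not by a single uniform draw on the cycle's $3\ell$ vertices. So you cannot feed the whole cycle directly into Lemma~\ref{lem:max-gaps-subset}; you would have to work with one fibre's vertices on the cycle (which are every third vertex, whence the factor~$3$). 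Second, and more seriously, the claim that $n'/N'$ is within a constant factor of $n/(3h)$ on \emph{each} cycle fails for short cycles: since $n=\Theta(h^{1/3})$, a cycle of length $o(h^{2/3})$ typically receives \emph{no} Phase~II red vertices at all, so $n'=O(1)$ and the ratio is not concentrated. Your argument needs a case split---cycles shorter than $6(\omega+\log n)h/n$ are trivially fine because a chunk cannot exceed its cycle's length, while for longer cycles the expected red count is $\Omega(\omega+\log n)\to\infty$ and a hypergeometric concentration bound plus a union bound over the $O(\log^2 h)$ cycles would close the gap---but none of this is in your write-up.

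The paper avoids both difficulties with a single device you did not find: it fixes one fibre $V_x$, and numbers \emph{all} of $V_x\setminus R_x(0)$ by concatenating the cycles (within each cycle, consecutively in a chosen orientation starting next to that cycle's $R(0)$-vertex). A chunk of length $L$ then produces roughly $L/3$ consecutive integers in $[N]$, $N=|V_x\setminus R_x(0)|\approx h$, none of which lies in the single uniformly random subset $R^+_x(T)$ supplied by Lemma~\ref{lem:red-Rx-indep-uniform}. One application of Lemma~\ref{lem:max-gaps-subset} with this global $N$ and $n:=r^+_x(T)$ then bounds all chunks on all cycles simultaneously, with no per-cycle conditioning, no concentration argument, and no union bound over cycles. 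Your plan is repairable along the lines sketched above, but in its current form the two distributional steps it hinges on are missing.
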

\begin{proof}
Choose an arbitrary $x\in C$.  
By Lemma~\ref{lem:red-Rx-indep-uniform}, 
   the conditions of Lemma~\ref{lem:max-gaps-subset} are satisfied if 
   we let $n := r^+_x(T)$ 
   and $N := \sabs{V_x\setminus R_x(0)}$.  
The vertices in $V_x\setminus R_x(0)$ are numbered in the following way.

For each cycle of $\widetilde G_C$, choose an orientation.  
The numbers associated to the vertices in the intersection of $V_x\setminus R_x(0)$ and this cycle
  are then taken consecutively:  
  starting with the vertex in $V_x\setminus R_x(0)$ which, in positive orientation, 
  is next to the $R(0)$-vertex of the cycle, and
  continuing to number in positive orientation.

If there is a path in $\widetilde G_C$ of length greater than 
   $6(\omega+\log n)h/n$ not containing a red vertex, 
   then there is a gap in $[N]$ larger than $(\omega+\log n)N/n$.
(Notice that every third vertex of the path belongs to $V_x$. 
 The factor~2 comes from the left and right end strips, 
 i.e., the vertices which are close to the $R(0)$-vertex on a cycle but which do not have consecutive numbers.) 
 By Lemma~\ref{lem:max-gaps-subset}, the probability of this happening is at most
  \begin{equation*}
    (n+1)\,e^{-\omega-\log n +O(n/N)} = \tfrac{n+1}{n}\, e^{-\omega +O(1)} = o(1).
  \end{equation*}
\end{proof}

\subsubsection*{\bf Bounding the expected number of cycles in $H$}
We now come to the classical first-moment argument which shows that, a.a.s., our random multi-graph $H$ has no cycles.
For the remainder of this section, 
   we condition on the event that the algorithm does not fail before Step~(\ref{enum:mainalg:expose-all}), 
   and that no two adjacent vertices have been coloured red 
   (cf.~Lemmas \ref{lem:alg-doesnt-fail-trivially} and~\ref{lem:red-not-adj} respectively).  

\begin{lemma}\label{lem:H-edge-prob}
  The probability that the edge set of $H$ contains a fixed set $F$ of edges with $\sabs{F}=\ell$ is at most
  \begin{equation*}
    O\biggl(  \ell! \binom{m}{\ell}\,\frac{\log^{2\ell} n}{n^{2\ell}} \biggr).
  \end{equation*}
\end{lemma}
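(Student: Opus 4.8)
The plan is to bound the probability that a \emph{particular} set $F$ of $\ell$ pale vertices, connecting prescribed pairs of chunks, actually realises those chunk-edges in $H$. Recall that a pale vertex $w$ lives in the fibre $V_{x}$ for some $x \in C$, and the chunk-edge it induces is determined by the two uncoloured neighbours of $w$ in $\widetilde G_C$, i.e.\ by the values $\sigma_{y x}(j_w)$ and $\sigma_{y' x}(j_w)$ (with $y,y'$ the two cycle-vertices other than $x$) landing in the two prescribed chunks. First I would fix which fibre each of the $\ell$ pale vertices of $F$ lives in and which ordered pair of chunks it must connect; summing over these choices will contribute the $\ell!\binom{m}{\ell}$ factor (there are $m$ pale vertices in total to choose the $\ell$-subset from, and a bounded number — at most $3$ — of choices of fibre per pale vertex, absorbed in the $O(\cdot)$, plus the orderings of endpoints).

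Next, for a single pale vertex forced to connect a chunk $A$ to a chunk $B$, I would estimate the probability of this event. Conditioning on everything exposed so far (which includes, crucially, the partition of $\widetilde G_C$ into chunks and the red vertices, via Lemmas~\ref{lem:red-Rx-indep-uniform} and~\ref{lem:max-gaps-red}), the relevant portions of the permutations $\sigma_{yx}$ and $\sigma_{y'x}$ are still uniform over the unexposed values. The probability that the image of the pale vertex under one of these permutations lands in a \emph{specific} chunk is at most (chunk size)$/\Omega(h)$, and by Lemma~\ref{lem:max-gaps-red} every chunk has size $O(h \log n / n)$ — take $\omega = \log n$ there, or absorb it — so this probability is $O(\log n / n)$. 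Both neighbours of the pale vertex must land correctly, so for one pale vertex the probability is $O(\log^2 n / n^2)$. The $\ell$ events, for the $\ell$ distinct pale vertices in $F$, are handled by exposing them one at a time: each conditional probability is still $O(\log^2 n / n^2)$ because removing $\ell \le m = O(\log^2 h)$ already-exposed values changes the denominators only by a lower-order amount and each chunk loses at most $\ell$ vertices, still leaving the bound $O(\log n /n)$ on the per-neighbour probability. Multiplying the $\ell$ conditional bounds gives $O(\log^{2\ell} n / n^{2\ell})$, and combining with the counting factor yields the claimed
\[
  O\!\left( \ell!\,\binom{m}{\ell}\,\frac{\log^{2\ell} n}{n^{2\ell}} \right).
\]

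The main obstacle I expect is making the conditioning airtight: the chunk structure, the identities of the red and pale vertices, and the sizes of the chunks are all random and mutually dependent, so I must be careful that when I claim "the relevant permutation values are still uniform" I have conditioned on a $\sigma$-algebra under which that is literally true, and that the chunk-size bound of Lemma~\ref{lem:max-gaps-red} may be invoked inside this conditioning (it holds a.a.s., so I condition on it as in the paragraph preceding the lemma). A secondary subtlety is that the two uncoloured neighbours of a pale vertex $w \in V_x$ are governed by \emph{two different} permutations $\sigma_{yx}, \sigma_{y'x}$ that are independent of each other, so the "land in chunk $A$" and "land in chunk $B$" events genuinely multiply; one must only check that neither value has been pre-exposed by an earlier pale vertex in $F$ living in the same fibre, which again costs only a negligible correction since $\ell$ is polylogarithmic while $h$ is polynomial. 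Once these bookkeeping points are settled the computation is the routine one sketched above.
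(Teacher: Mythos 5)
Your proposal is correct and follows essentially the same route as the paper: union-bound over the $\ell!\binom{m}{\ell}$ assignments of pale vertices to the edges of $F$, then bound each pale vertex's chance of realising its assigned chunk-edge by (chunk size)/$\Theta(h)$ squared, using Lemma~\ref{lem:max-gaps-red} for the chunk sizes and the conditioning set up before that lemma. Your explicit attention to the conditioning and to the negligible effect of previously exposed values is more careful than the paper's own (quite terse) write-up, but it is the same argument.
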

\begin{proof}
  Recall that $n$ denotes the number of vertices of $H$, which is equal to the number of chunks in $\widetilde G_C$.  This is equal to the number of red
  vertices at the end of Phase~II, which is $\Theta(h^{\nfrac13})$.  The number $m$ of edges of $H$ is equal to the number $p(T)$ of pale vertices after
  termination of Phase~II, which is $O(\log^2 h)$.  The edges come in six different types, depending on which fibre $V_y$, $y\in S$, contains the corresponding
  pale vertex, and also which fibres contain the end-vertices of the two non-exposed edges adjacent to the pale vertex.
  
  For each edge of~$H$, one by one, we draw the two end-vertices one by one.  An edge corresponding to a pale vertex $v$ of $\widetilde G$ connects two fixed
  vertices of~$H$ if the two yet unexposed edges incident to~$v$ end turn out to be contained in the chunks corresponding to the fixed vertices of~$H$.  Since
  the sizes of the chunks are a.a.s.\ $O(\nfrac{h\log^2n}{n})$ by Lemma~\ref{lem:max-gaps-red}, and the number of possible neighbors of~$v$ is between $h$ and
  $h-n-m+O(1) = \Theta(h)$, the probability that the edge of~$H$ connects the two fixed vertices is $O(\frac{\log^2 n}{n^2}$.
  
  From this, the statement of the lemma follows.
\end{proof}


Now we adapt the classical first-moment calculation to prove that there are no cycles in $H$, and therefore, no cycles in the graph induced on uncoloured
vertices in Step~(7).

\begin{lemma}\label{lem:no-circle}
  A.a.s.\ $H$ contains no cycles.
\end{lemma}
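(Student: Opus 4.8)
The plan is to run the standard first–moment argument over all possible cycles of the multigraph $H$, using Lemma~\ref{lem:H-edge-prob} to control the probability that the edges of a given cycle are all present. A cycle of length $\ell$ in $H$ is determined by a choice of $\ell$ of the $n$ vertices (chunks), an arrangement of them in a cyclic order, and a choice of which $\ell$ of the $m$ pale vertices serve as the edges, matched to the consecutive pairs. I would bound the number of potential cycles of length $\ell$ (including loops, $\ell=1$, and digons, $\ell=2$, which the multigraph structure allows) by something like $n^{\ell}\cdot \ell!\binom{m}{\ell}$, and then multiply by the probability from Lemma~\ref{lem:H-edge-prob} that a fixed edge set of size $\ell$ occurs, namely $O\bigl(\ell!\binom{m}{\ell}\log^{2\ell}n/n^{2\ell}\bigr)$. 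Actually the cleanest route is: the expected number of cycles of length $\ell$ is at most (number of ways to pick and cyclically order $\ell$ chunks) times (probability a fixed $\ell$-set of edges appears as the cycle's edges in the required pattern), and the latter is already bounded by Lemma~\ref{lem:H-edge-prob}.

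Concretely, I would write
\begin{equation*}
  \Exp[\#\text{cycles of length }\ell\text{ in }H]
  \;\le\;
  n^{\ell}\cdot O\!\left(\ell!\binom{m}{\ell}\frac{\log^{2\ell}n}{n^{2\ell}}\right)
  \;=\;
  O\!\left(\ell!\binom{m}{\ell}\frac{\log^{2\ell}n}{n^{\ell}}\right),
\end{equation*}
and then, using $\binom{m}{\ell}\le m^{\ell}/\ell!$ and $m = O(\log^2 h) = O(\log^2 n^3) = O(\log^2 n)$ (since $n = \Theta(h^{1/3})$), this is $O\bigl((m\log^2 n / n)^{\ell}\bigr) = O\bigl((\operatorname{polylog}n / n)^{\ell}\bigr)$. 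Summing over $\ell$ from $1$ to $m$ gives a geometric series with ratio $o(1)$, so the total expected number of cycles is $O(\operatorname{polylog}n/n) = o(1)$. Markov's inequality then yields that a.a.s.\ $H$ has no cycle, and since (by the discussion before the lemma, and Lemma~\ref{lem:red-not-adj}) the graph induced on the uncoloured vertices reaching Step~(\ref{enum:mainalg:bipartite}) is exactly $H$ with its chunks expanded into paths, acyclicity of $H$ gives acyclicity of that graph. This is what Step~(\ref{enum:mainalg:bipartite}) needs, completing the proof of correctness.

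One subtlety I would be careful about: the edge-probability bound of Lemma~\ref{lem:H-edge-prob} is a statement about a \emph{fixed} edge set $F$, but in the first-moment sum each term is exactly of that shape once we fix the cyclic arrangement of chunks and the set $F$ of pale vertices used, so the bound applies term by term; I do not need independence between different pale vertices beyond what Lemma~\ref{lem:H-edge-prob} already encapsulates. A second point is that small cycles — loops ($\ell=1$) and parallel pairs ($\ell=2$) — must be included in the count, since these are precisely the obstructions that made the chunk-graph uncolourable back at Step~(\ref{enum:mainalg:setup-loop}); the bound above handles them uniformly because the factor $(\operatorname{polylog}n/n)^{\ell}$ is already $o(1)$ at $\ell=1$.

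The main obstacle is really just bookkeeping the constants and confirming that $m\,\log^2 n / n = o(1)$ with room to spare so the geometric series converges — but since $m = O(\log^2 h)$ and $n = \Theta(h^{1/3})$, we have $m\log^2 n/n = O(h^{-1/3}\operatorname{polylog}h) = o(1)$, so the series is dominated by a geometric series with ratio $o(1)$ and everything goes through. There is no deep difficulty remaining here; the real work was done in Lemma~\ref{lem:max-gaps-red} (bounding chunk sizes) and Lemma~\ref{lem:H-edge-prob} (turning that into an edge-probability bound), and this lemma is the short first-moment capstone.
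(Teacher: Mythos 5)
Your proposal is correct and follows essentially the same route as the paper: a first-moment count of cycles of every length $\ell\ge 1$ (loops and parallel edges included), bounding each term by (number of cyclic arrangements of $\ell$ chunks) times the edge-set probability from Lemma~\ref{lem:H-edge-prob}, and summing. The only cosmetic difference is that you close the sum as a geometric series with ratio $O(m\log^2 n/n)=o(1)$, whereas the paper bounds it by $(1+t)^m-1\le e^{mt}-1$ with $t=O(\log^2 n/n)$; both give $o(1)$.
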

\begin{proof}
  By Lemma~\ref{lem:H-edge-prob}, the expected number of cycles of length $\ell \ge 1$ is 
  \begin{equation*}
    \sum_\sstack{C\text{ cycle}\\\sabs{C}=\ell} \Prb[ C \subseteq H ] 
    =
    O\biggl(  \binom{n}{\ell}\,\ell!\;\binom{m}{\ell}\;\frac{\log^{2\ell} n}{n^{2\ell}} \biggr).
  \end{equation*}
  Summing over all possible values of $\ell$, we obtain an upper bound for the expected number of cycles in $H$: With $t := \nfrac{(C \log^{2}n)}{n}$ for a
  suitable constant $C$, we have
  \begin{multline*}
    \sum_{\ell=1}^m \binom{n}{\ell}\;\ell!\;\binom{m}{\ell}\;\frac{\log^{2\ell} n}{n^{2\ell}}
    \\
    \le
    \sum_{\ell=1}^m \binom{m}{\ell} t^\ell
    =
    -1 + (1+t)^m
    \le
    -1 + e^{m t}
    =
    \\
    =
    -1 + e^{\nfrac{(C \log^4n)}{n}} = o(1).
  \end{multline*}

\end{proof}

\section{Conclusions}
The argument for 3-colourability of random lifts of $K_5\backslash e$ in this manuscript can be extended to a more general class of base graphs.  Let $G:=
G_{k,s}$ be a graph obtained by joining a stable set $S$ of size $s$ to a cycle $C$ of size $k$, where $k\ge 3$ and $s\ge 1$.  For $k=3$ and $s=2$ we recover
$K_5\setminus e$.  The proof of Theorem~\ref{thm:K_5-e} extends with hardly any changes to the following.

\begin{theorem}
  The chromatic number of a random lift of $G_{k,s}$ is a.a.s.\ three.
\end{theorem}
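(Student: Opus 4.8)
The plan is to follow the proof of Theorem~\ref{thm:K_5-e}. First observe that if $k$ is even then $G_{k,s}$ is itself $3$-chromatic (two-colour the even cycle $C$ and give every vertex of $S$ the third colour), so, composing a proper $3$-colouring of $G_{k,s}$ with the covering homomorphism $\widetilde G\to G_{k,s}$, we get $\chi(\widetilde G)\le\chi(G_{k,s})=3$ deterministically. For the lower bound $\chi(\widetilde G)\ge 3$ a.a.s., which is needed for every $k\ge 3$, use that $G_{k,s}$ contains the triangle $\{x_1,x_2,y_1\}$: its lift inside $\widetilde G$ is a disjoint union of cycles of lengths $3\ell$, where $\ell$ runs through the cycle lengths of the uniformly random permutation $\sigma_{x_1x_2}\circ\sigma_{x_2y_1}\circ\sigma_{x_1y_1}$ of $[h]$ (a product of three independent uniform permutations, hence uniform). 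Such a permutation has in expectation $\tfrac12\log h+O(1)$ cycles of odd length, with variance $O(\log h)$, so a.a.s.\ it has at least one; this yields an odd cycle in $\widetilde G$, so $\widetilde G$ is a.a.s.\ not bipartite. From now on assume $k$ odd and concentrate on the upper bound.

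\textbf{Running the algorithm with $k$ in place of $3$.}
I would run Algorithm~\ref{alg:3colalg} essentially unchanged. Now $\widetilde G_C$ is a disjoint union of cycles whose lengths are divisible by $k$, corresponding to the cycles of $\sigma_{x_1x_2}\circ\dots\circ\sigma_{x_kx_1}$, and the analogue of Lemma~\ref{lem:ub-no-cycles} holds: a.a.s.\ there are at most $\log^2h$ of them, which Phase~I destroys by reddening one vertex each, leaving the chunks as paths. The loop of Phase~II reddens $\Theta(h^{\nfrac1k})$ further vertices of $\widetilde G_C$; Phase~III reddens the still red-free vertices of $\widetilde G_S$ and $2$-colours what remains. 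The analogues of Lemmas~\ref{lem:alg-doesnt-fail-trivially}, \ref{lem:red-Rx-indep-uniform} and~\ref{lem:red-not-adj} go through verbatim, with the exponent $\nfrac13$ becoming $\nfrac1k$; the displayed estimates stay $o(1)$ precisely because $k\ge 3$. Likewise Lemmas~\ref{lem:distrib-gaps-points-interval}--\ref{lem:max-gaps-red} carry over, with ``every third vertex of the path'' replaced by ``every $k$-th vertex'' and some absolute constants adjusted, so that a.a.s.\ no chunk is longer than $O\bigl((\omega+\log n)h/n\bigr)$.

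\textbf{The chunk-hypergraph.}
After Phase~II the uncoloured vertices are the chunks together with those vertices of $\widetilde G_S$ that still have two or more uncoloured neighbours in $\widetilde G_C$; each such vertex forces all of these neighbours to receive the same one of the two non-red colours, so it behaves as a hyperedge of arity at most $k-1$ on the set of chunks. As in Lemmas~\ref{lem:H-edge-prob}--\ref{lem:no-circle}, a first-moment computation — using the bound on chunk sizes and the fact that a vertex of $\widetilde G_S$ has $\Theta(h)$ possible neighbours — shows that the probability that a fixed set of $\ell$ of these hyperedges is realised is small enough that a.a.s.\ the bipartite incidence graph between the chunks and the constraint-vertices is a forest; the larger arity only gives more room. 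When that incidence graph is a forest the $2$-colouring in Step~(\ref{enum:mainalg:bipartite}) succeeds by a routine propagation (root each component at a chunk, $2$-colour it, and use the freedom to flip a chunk's two-colouring to meet each incoming constraint), and $\widetilde G$ is properly $3$-coloured.

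\textbf{Where the real work is.}
The one step that is not purely cosmetic is keeping the number $m$ of constraint-vertices small against the number $n=\Theta(h^{\nfrac1k})$ of chunks, since the first-moment bound needs $m=o(n/\operatorname{polylog}n)$. For $(k,s)=(3,2)$ this is automatic: a reddened vertex of $\widetilde G_C$ has exactly two neighbours in $\widetilde G_S$, namely the incoming and outgoing pale vertices of a loop step, so $\sabs{P(\cdot)}$ stays equal to $2r(0)=O(\log^2h)$, and a processed pale vertex ends up with two red neighbours and a single uncoloured one, a vacuous constraint. For larger $k$ or $s$ there is slack — a reddened vertex of $\widetilde G_C$ carries $s-2$ surplus $\widetilde G_S$-edges, and a processed vertex of $\widetilde G_S$ retains $k-2$ uncoloured $\widetilde G_C$-neighbours — and a naive count gives $m=\Theta(n)$, enough to make the chunk-hypergraph contain cycles a.a.s. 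I expect the bulk of the argument to lie in a more careful design of the reddening process that absorbs this surplus — arranging that the surplus $\widetilde G_S$-neighbours of each newly reddened vertex, and the leftover $\widetilde G_C$-neighbours of each processed vertex, end up carrying no constraint — while still producing $\Theta(h^{\nfrac1k})$ red vertices in $\widetilde G_C$ and only polylogarithmically many genuine constraint-vertices; this is the step I would expect to cost the most effort.
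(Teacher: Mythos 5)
Your proposal is not a complete proof, and you essentially say so yourself: the final paragraph defers the central difficulty to ``a more careful design of the reddening process'' that is never specified. Concretely, the gap is this. For the first-moment argument of Lemmas~\ref{lem:H-edge-prob}--\ref{lem:no-circle} to give $o(1)$, the number $m$ of constraint-vertices must satisfy $m=o(n/\log^2 n)$, where $n=\Theta(T)$ is the number of chunks. In the case $(k,s)=(3,2)$ this holds because each loop iteration retires exactly one pale vertex and creates exactly one, so $m=2r(0)=O(\log^2 h)$ while $n$ grows. For $s\ge 3$, each reddened vertex of $\widetilde G_C$ has $s-1$ collateral neighbours in $\widetilde G_S$ that each acquire a first red neighbour and hence become arity-$(k-1)$ constraints, so $m$ grows by $s-2\ge 1$ per iteration and $m=\Theta(n)$; for odd $k\ge 5$ even with $s=2$, the processed vertex $v$ retains $k-2\ge 2$ uncoloured neighbours and remains a constraint, so again $m\ge T=\Theta(n)$. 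With $m=\Theta(n)$ the expected number of cycles in the incidence structure (e.g.\ a single constraint-vertex sending two of its legs into the same chunk) is $\Theta(m/n)=\Theta(1)$, so Step~(\ref{enum:mainalg:bipartite}) fails with probability bounded away from zero and the proof does not close. Your suggested repair -- reddening further $\widetilde G_C$-neighbours to saturate each surplus constraint -- is a branching process with mean offspring $(k-2)(s-1)\ge 1$, so it does not terminate while the red set is still $o(h)$; an actual argument here would have to be substantially different, and you do not supply one.

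For what it is worth, the paper itself gives no proof of this theorem: it only asserts that the proof of Theorem~\ref{thm:K_5-e} ``extends with hardly any changes,'' which, by your own (correct) accounting, is doubtful outside the cases $k$ even, $s=1$, and $(k,s)=(3,2)$. So your diagnosis of where the difficulty lies is a genuine contribution of the write-up, and the preliminary reductions (the deterministic upper bound $\chi(\widetilde G)\le\chi(G_{k,s})=3$ for even $k$, and the a.a.s.\ odd cycle in the lift of the triangle $x_1x_2y_1$ for the lower bound) are fine. One smaller slip: the remark that ``the larger arity only gives more room'' in the hypergraph first-moment computation points the wrong way -- larger arity adds edges to the incidence graph and makes cycles more likely, not less -- though this is subsumed by the main issue above. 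As a proof of the stated theorem, however, the proposal has a hole exactly where you flag it.
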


It is known that the chromatic number of random 4-regular graphs (with uniform distribution) is three~\cite{wormald}.  Even though random lifts of $K_{d+1}$
have some similarity to random $d$-regular graphs, adapting the methods of the latter to obtain results for random lifts of $K_{d+1}$ appears to be a challenging
task.

\section*{Appendix: Distribution of the gaps between $n$ points drawn in $[0,1]$}

As mentioned above, Lemma~\ref{lem:distrib-gaps-points-interval} is a special case of the Bapat-Beg theorem.  For the sake of completeness, we give an
elementary proof.

\begin{proof}[Proof of Lemma~\ref{lem:distrib-gaps-points-interval}]
  Clearly, $\min(X_1,\dots,X_n)$ has cumulative distribution function $t\mapsto 1-(1-t)^n$.  This settles the easy cases when $k=0$ or, $k=n$.

  Partitioning $\bigotimes_{j=1}^n [0,1]$ into $n!$ sets we need to compute
  \begin{equation}\label{eq:distrgaps:mainprob}
    \Prb[S'_{k+1}-S'_k \le a] = n! \int_{\RR^n} 1_{\{0\le\pr_1\le\dots\le\pr_n\le 1\}} 1_{\{\pr_k \le \pr_{k+1} \le \pr_k + a\}} \,d\lambda^n.
  \end{equation}
  Denoting
  \begin{equation*}
    v(\ell,t) := \int_{\RR^\ell} 1_{\{0\le\pr_1\le\dots\le\pr_\ell\le t\}} \,d\lambda^n = \frac{t^\ell}{\ell!}
  \end{equation*}
  we have that~\eqref{eq:distrgaps:mainprob} is equal to 
  \begin{multline}\label{eq:distrgaps:2-vars}
    \int_0^1\int_0^1 v(s,k-1) v(1-t,n-k-1) 1_{s\le t \le s+a} \,dt \,ds 
    =\\=
    \int_0^1 v(s,k-1) \int_s^{\min(1,s+a)}  v(1-t,n-k-1) \,dt \,ds 
    =\\=
    \frac{1}{(k-1)! (n-k-1)!} \int_0^1 s^{k-1} \int_s^{\min(1,s+a)}  (1-t)^{n-k-1} \,dt \,ds 
  \end{multline}
  We evaluate the inner integral
  \begin{multline*}
    \int_s^{\min(1,s+a)}  (1-t)^{n-k-1} \,dt
    =\\=
    \int_s^{\min(1,s+a)}  (1-t)^{n-k-1} \,dt  
    =\\=
    \begin{cases}
      \frac{1}{n-k} (1-s)^{n-k}                               & \text{ if }s \le 1-a \\
      \frac{1}{n-k} (1-s)^{n-k} - \frac{1}{n-k} (1-a-s)^{n-k} & \text{ if }s \ge 1-a.
    \end{cases}
  \end{multline*}
  Then the integral in~\eqref{eq:distrgaps:2-vars} (without the factorial factor) becomes
  \begin{multline*}
    \frac{1}{n-k} \int_0^1 s^{k-1} (1-s)^{n-k} \,ds  - \frac{1}{n-k} \int_0^{1-a} s^{k-1} (1-a-s)^{n-k}
    =\\=
    - \frac{(k-1)! (n-k-1)! }{n!} (0-1) + \frac{(k-1)! (n-k-1)! }{n!} (0- (1-a)^n)
    =\\=
    \frac{(k-1)! (n-k-1)! }{n!}( 1 - (1-a)^n ).
  \end{multline*}
  Hence, \eqref{eq:distrgaps:mainprob} is equal to
  \begin{equation*}
    n! \frac{1}{(k-1)! (n-k-1)!} \frac{(k-1)! (n-k-1)! }{n!}( 1 - (1-a)^n ) = 1 - (1-a)^n.
  \end{equation*}
  
\end{proof}

\end{document}